\documentclass[12pt,reqno]{amsart}
\usepackage{amscd,amssymb,amsmath,mathabx,etex,float}
\newtheorem{thm}[equation]{Theorem}
\numberwithin{equation}{section}
\newtheorem{cor}[equation]{Corollary}

\newtheorem{lem}[equation]{Lemma}

\newtheorem{defin}[equation]{Definition}

\newtheorem{prop}[equation]{Proposition}

\begin{document}
\raggedbottom \voffset=-.7truein \hoffset=0truein \vsize=8truein
\hsize=6truein \textheight=8truein \textwidth=6truein
\baselineskip=18truept

\def\mapright#1{\ \smash{\mathop{\longrightarrow}\limits^{#1}}\ }
\def\mapleft#1{\smash{\mathop{\longleftarrow}\limits^{#1}}}
\def\mapup#1{\Big\uparrow\rlap{$\vcenter {\hbox {$#1$}}$}}
\def\mapdown#1{\Big\downarrow\rlap{$\vcenter {\hbox {$\ssize{#1}$}}$}}
\def\mapne#1{\nearrow\rlap{$\vcenter {\hbox {$#1$}}$}}
\def\mapse#1{\searrow\rlap{$\vcenter {\hbox {$\ssize{#1}$}}$}}
\def\mapr#1{\smash{\mathop{\rightarrow}\limits^{#1}}}
\def\ss{\smallskip}
\def\ar{\arrow}
\def\vp{v_1^{-1}\pi}
\def\at{{\widetilde\alpha}}
\def\sm{\wedge}
\def\la{\langle}
\def\ra{\rangle}
\def\on{\operatorname}
\def\ol#1{\overline{#1}{}}
\def\spin{\on{Spin}}
\def\cat{\on{cat}}
\def\lbar{\ell}
\def\qed{\quad\rule{8pt}{8pt}\bigskip}
\def\ssize{\scriptstyle}
\def\a{\alpha}
\def\bz{{\Bbb Z}}
\def\Rhat{\hat{R}}
\def\im{\on{im}}
\def\ct{\widetilde{C}}
\def\ext{\on{Ext}}
\def\sq{\on{Sq}}
\def\eps{\epsilon}
\def\ar#1{\stackrel {#1}{\rightarrow}}
\def\br{{\bold R}}
\def\bC{{\bold C}}
\def\bA{{\bold A}}
\def\bB{{\bold B}}
\def\bD{{\bold D}}
\def\bh{{\bold H}}
\def\bQ{{\bold Q}}
\def\bP{{\bold P}}
\def\bx{{\bold x}}
\def\bo{{\bold{bo}}}
\def\si{\sigma}
\def\Vbar{{\overline V}}
\def\dbar{{\overline d}}
\def\wbar{{\overline w}}
\def\Sum{\sum}
\def\tfrac{\textstyle\frac}
\def\tb{\textstyle\binom}
\def\Si{\Sigma}
\def\w{\wedge}
\def\equ{\begin{equation}}
\def\AF{\operatorname{AF}}
\def\b{\beta}
\def\G{\Gamma}
\def\D{\Delta}
\def\L{\Lambda}
\def\g{\gamma}
\def\k{\kappa}
\def\psit{\widetilde{\Psi}}
\def\tht{\widetilde{\Theta}}
\def\psiu{{\underline{\Psi}}}
\def\thu{{\underline{\Theta}}}
\def\aee{A_{\text{ee}}}
\def\aeo{A_{\text{eo}}}
\def\aoo{A_{\text{oo}}}
\def\aoe{A_{\text{oe}}}
\def\vbar{{\overline v}}
\def\endeq{\end{equation}}
\def\sn{S^{2n+1}}
\def\zp{\bold Z_p}
\def\cR{{\mathcal R}}
\def\P{{\mathcal P}}
\def\cF{{\mathcal F}}
\def\cQ{{\mathcal Q}}
\def\cj{{\cal J}}
\def\zt{{\bold Z}_2}
\def\bs{{\bold s}}
\def\bof{{\bold f}}
\def\bq{{\bold Q}}
\def\be{{\bold e}}
\def\Hom{\on{Hom}}
\def\ker{\on{ker}}
\def\kot{\widetilde{KO}}
\def\coker{\on{coker}}
\def\da{\downarrow}
\def\colim{\operatornamewithlimits{colim}}
\def\zphat{\bz_2^\wedge}
\def\io{\iota}
\def\Om{\Omega}
\def\Prod{\prod}
\def\e{{\cal E}}
\def\zlt{\Z_{(2)}}
\def\exp{\on{exp}}
\def\abar{{\overline a}}
\def\xbar{{\overline x}}
\def\ybar{{\overline y}}
\def\zbar{{\overline z}}
\def\Rbar{{\overline R}}
\def\nbar{{\overline n}}
\def\cbar{{\overline c}}
\def\qbar{{\overline q}}
\def\bbar{{\overline b}}
\def\et{{\widetilde E}}
\def\ni{\noindent}
\def\coef{\on{coef}}
\def\den{\on{den}}
\def\lcm{\on{l.c.m.}}
\def\vi{v_1^{-1}}
\def\ot{\otimes}
\def\psibar{{\overline\psi}}
\def\thbar{{\overline\theta}}
\def\mhat{{\hat m}}
\def\exc{\on{exc}}
\def\ms{\medskip}
\def\ehat{{\hat e}}
\def\etao{{\eta_{\text{od}}}}
\def\etae{{\eta_{\text{ev}}}}
\def\dirlim{\operatornamewithlimits{dirlim}}
\def\gt{\widetilde{L}}
\def\lt{\widetilde{\lambda}}
\def\st{\widetilde{s}}
\def\ft{\widetilde{f}}
\def\sgd{\on{sgd}}
\def\lfl{\lfloor}
\def\rfl{\rfloor}
\def\ord{\on{ord}}
\def\gd{{\on{gd}}}
\def\rk{{{\on{rk}}_2}}
\def\nbar{{\overline{n}}}
\def\MC{\on{MC}}
\def\lg{{\on{lg}}}
\def\cB{\mathcal{B}}
\def\cS{\mathcal{S}}
\def\cP{\mathcal{P}}
\def\N{{\Bbb N}}
\def\Z{{\Bbb Z}}
\def\Q{{\Bbb Q}}
\def\R{{\Bbb R}}
\def\C{{\Bbb C}}
\def\l{\left}
\def\r{\right}
\def\mo{\on{mod}}
\def\xt{\times}
\def\notimm{\not\subseteq}
\def\Remark{\noindent{\it  Remark}}
\def\kut{\widetilde{KU}}
\def\hti{\widetilde{h}}
\def\lg{\operatorname{lg}}

\def\*#1{\mathbf{#1}}
\def\0{$\*0$}
\def\1{$\*1$}
\def\22{$(\*2,\*2)$}
\def\33{$(\*3,\*3)$}
\def\ss{\smallskip}
\def\ssum{\sum\limits}
\def\dsum{\displaystyle\sum}
\def\la{\langle}
\def\ra{\rangle}
\def\on{\operatorname}
\def\od{\text{od}}
\def\ev{\text{ev}}
\def\o{\on{o}}
\def\U{\on{U}}
\def\lg{\on{lg}}
\def\a{\alpha}
\def\bz{{\Bbb Z}}
\def\eps{\varepsilon}
\def\bc{{\bold C}}
\def\bN{{\bold N}}
\def\nut{\widetilde{\nu}}
\def\tfrac{\textstyle\frac}
\def\b{\beta}
\def\G{\Gamma}
\def\g{\gamma}
\def\zt{{\Bbb Z}_2}
\def\pt{\widetilde{p}}
\def\zth{{\bold Z}_2^\wedge}
\def\bs{{\bold s}}
\def\bx{{\bold x}}
\def\bof{{\bold f}}
\def\bq{{\bold Q}}
\def\be{{\bold e}}
\def\lline{\rule{.6in}{.6pt}}
\def\xb{{\overline x}}
\def\xbar{{\overline x}}
\def\ybar{{\overline y}}
\def\zbar{{\overline z}}
\def\ebar{{\overline \be}}
\def\nbar{{\overline n}}
\def\rbar{{\overline r}}
\def\Mbar{{\overline M}}
\def\et{{\widetilde e}}
\def\ni{\noindent}
\def\ms{\medskip}
\def\ehat{{\hat e}}
\def\what{{\widehat w}}
\def\Yhat{{\widehat Y}}
\def\nbar{{\overline{n}}}
\def\minp{\min\nolimits'}
\def\mul{\on{mul}}
\def\N{{\Bbb N}}
\def\Z{{\Bbb Z}}
\def\Q{{\Bbb Q}}
\def\R{{\Bbb R}}
\def\C{{\Bbb C}}
\def\notint{\cancel\cap}
\def\se{\operatorname{secat}}
\def\cS{\mathcal S}
\def\cR{\mathcal R}
\def\el{\ell}
\def\TC{\on{TC}}
\def\dstyle{\displaystyle}
\def\ds{\dstyle}
\def\mt{\widetilde{\mu}}
\def\zcl{\on{zcl}}
\def\Vb#1{{\overline{V_{#1}}}}

\def\Remark{\noindent{\it  Remark}}
\title[A lower bound for higher topological complexity]
{A lower bound for higher topological complexity of real projective space}
\author{Donald M. Davis}
\address{Department of Mathematics, Lehigh University\\Bethlehem, PA 18015, USA}
\email{dmd1@lehigh.edu}
\date{September 19, 2017}

\keywords{projective space, topological complexity}
\thanks {2000 {\it Mathematics Subject Classification}: 55M30, 68T40, 70B15.}

\maketitle
\begin{abstract} We obtain an explicit formula for the best lower bound for the higher topological complexity, $\TC_k(RP^{n})$, of real projective space implied by mod 2 cohomology.
 \end{abstract}
\section{Main theorem}\label{intro}
The notion of higher topological complexity, $\TC_k(X)$, of a topological space $X$ was introduced in \cite{Rud}. It can be thought of as one less than the minimal number of rules required to tell how to move consecutively between any $k$ specified points of $X$. In \cite{5}, the study of $\TC_k(P^n)$ was initiated, where $P^n$ denotes real projective space.
Using $\zt$  coefficients for all cohomology groups, define $\zcl_k(X)$ to be the maximal number of elements in $\ker(\Delta^*:H^*(X)^{\ot k}\to H^*(X))$ with nonzero product.
It is standard that $$\TC_k(X)\ge\zcl_k(X).$$ In \cite{5}, it was shown that
$$\zcl_k(P^n)=\max\{a_1+\cdots +a_{k-1}:(x_1+x_k)^{a_1}\cdots(x_{k-1}+x_k)^{a_{k-1}}\ne0\}$$
in $\zt[x_1,\ldots,x_k]/(x_1^{n+1},\ldots,x_k^{n+1}).$
In Theorem \ref{mainthm} we give an explicit formula for $\zcl_k(P^n)$, and hence a lower bound for $\TC_k(P^n)$.

Our main theorem, \ref{mainthm}, requires some specialized notation.
\begin{defin} If $n=\sum \eps_j2^j$ with $\eps_j\in\{0,1\}$ (so the numbers $\eps_j$ form the binary expansion of $n$), let $$Z_i(n)=\ds\sum_{j=0}^i(1-\eps_j)2^j,$$
and let $$S(n)=\{i:\eps_i=\eps_{i-1}=1\text{ and }\eps_{i+1}=0\}.$$
\end{defin}
\noindent Thus $Z_i(n)$ is the sum of the 2-powers $\le 2^i$ which correspond to the 0's in the binary expansion of $n$. Note that $Z_i(n)=2^{i+1}-1-(n\text{ mod }2^{i+1})$.
The $i$'s in $S(n)$ are those that begin a sequence of two or more consecutive 1's in the binary expansion of $n$.
Also, $\nu(n)=\max\{t:2^t\text{ divides }n\}$.
\begin{thm}\label{mainthm} For $n\ge0$ and $k\ge3$,
\begin{equation}\label{maineq}\zcl_k(P^n)=kn-\max\{2^{\nu(n+1)}-1,2^{i+1}-1-k\cdot Z_i(n):i\in S(n)\}.
\end{equation}
\end{thm}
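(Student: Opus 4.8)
The plan is to start from the formula of \cite{5}, namely $\zcl_k(P^n)=\max\{a_1+\cdots+a_{k-1}:\prod_{j=1}^{k-1}(x_j+x_k)^{a_j}\ne0\}$ in $\zt[x_1,\dots,x_k]/(x_1^{n+1},\dots,x_k^{n+1})$, and to translate nonvanishing of the product into a digit condition. Expanding, the monomials $x_1^{b_1}\cdots x_{k-1}^{b_{k-1}}x_k^{\sum_j(a_j-b_j)}$ that occur are pairwise distinct, so the product is nonzero iff there exist $b_1,\dots,b_{k-1}\ge0$ with $\binom{a_j}{b_j}$ odd for all $j$, $b_j\le n$ for all $j$, and $\sum_j(a_j-b_j)\le n$. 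By Lucas' theorem, $\binom{a_j}{b_j}$ is odd iff $c_j:=a_j-b_j$ and $b_j$ have disjoint binary expansions (write $b_j\wedge c_j=0$). Since $kn-\sum_ja_j=\sum_j(n-b_j)+(n-\sum_jc_j)$, the theorem is equivalent to the assertion that
$$\min\Bigl\{\sum_{j=1}^{k-1}(n-b_j)+\bigl(n-\sum_{j=1}^{k-1}c_j\bigr)\ :\ b_j\wedge c_j=0,\ b_j\le n,\ \sum_jc_j\le n\Bigr\}=M.$$

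For the inequality $\ge M$ I would bound the left side below by each term of the maximum defining $M$, using the same device of truncating modulo a power of $2$. If $r=\nu(n+1)$, then $n\bmod2^r=2^r-1$, and disjointness gives $(b_j\bmod2^r)+(c_j\bmod2^r)\le2^r-1$, whence $n-b_j\ge(n\bmod2^r)-(b_j\bmod2^r)\ge c_j\bmod2^r$; summing over $j$ and using $\sum_j(c_j\bmod2^r)\ge(\sum_jc_j)\bmod2^r$ together with $\lfloor(\sum_jc_j)/2^r\rfloor\le\lfloor n/2^r\rfloor$ gives $\sum_j(n-b_j)+(n-\sum_jc_j)\ge(n\bmod2^r)=2^r-1$. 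For $i\in S(n)$, set $R=2^{i+1}$ and $\zeta=Z_i(n)$; then $n\bmod R=R-1-\zeta$ and $\zeta$ is supported in bit positions $\le i-2$, so disjointness together with a split according to whether $\lfloor b_j/R\rfloor$ equals $\lfloor n/R\rfloor$ or is strictly smaller yields $n-b_j\ge(c_j\bmod R)-\zeta$ for every $j$; summing as before gives $\sum_j(n-b_j)+(n-\sum_jc_j)\ge(n\bmod R)-(k-1)\zeta=2^{i+1}-1-kZ_i(n)$. Hence $\zcl_k(P^n)\le kn-M$.

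For the matching lower bound I would construct an extremal configuration. When $M\le1$ a direct choice works (e.g.\ all $a_j=n$ when $M=0$). Otherwise $M$ is realized by some $i^*\in S(n)$: if $M=2^{\nu(n+1)}-1$ with $\nu(n+1)=r\ge2$, then $r-1\in S(n)$ and $Z_{r-1}(n)=0$, so the term indexed by $r-1$ equals $M$. Taking all $b_j=n$, one computes $n-M=2^{i^*+1}\lfloor n/2^{i^*+1}\rfloor+(k-1)Z_{i^*}(n)$, so it is enough to write this number as $\sum_jc_j$ with each $c_j\wedge n=0$ and $\sum_jc_j\le n$ (which reduces to $M\ge0$). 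Letting each $c_j$ contain every $0$-bit of $n$ in positions $\le i^*-2$ accounts for the summand $(k-1)Z_{i^*}(n)$; what remains is to realize $2^{i^*+1}\lfloor n/2^{i^*+1}\rfloor$ — whose binary $1$'s are precisely those of $n$ in positions $>i^*$ — as $\sum_j(\text{high parts of the }c_j)$, i.e.\ using multiplicities $\le k-1$ at $0$-bits of $n$ above position $i^*$.

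This last point is the main obstacle. To create a maximal run of $1$'s of $n$ in positions $[s,t]$ with $s>i^*$ one must feed carries up from the $0$-bits at and below $s-1$, and whether the capacity $k-1$ suffices depends delicately on the run length together with the distribution of lower $0$-bits. I expect the resolution to be a greedy allocation processed from the top bit downward, together with the observation that if some run above $i^*$ truly cannot be built within the allowed multiplicities, then its top $t$ lies in $S(n)$ and $2^{t+1}-1-kZ_t(n)$ strictly exceeds $M$, contradicting the maximality of $i^*$. Turning this ``unbuildable run forces a larger obstruction'' heuristic into a proof that the greedy process succeeds exactly up to the value $kn-M$ is where the real work lies; combined with the upper bound, it gives \eqref{maineq}.
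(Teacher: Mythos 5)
Your reduction to digit conditions and your upper bound are sound, and they follow a genuinely different route from the paper: the paper never argues directly against the closed form, but first establishes the recursion $z_k(2^e+d)=\min(z_k(d)+k2^e,(k-1)(2^{e+1}-1))$ (Theorem \ref{thm1}, proved via Lucas's theorem and the multiset quantity $m_j(n)=\phi(Z(n)^j,n)$ of Lemma \ref{lem1}) and then derives (\ref{maineq}) by a second induction on the leading binary digit. Your truncation inequalities $n-b_j\ge (c_j\bmod 2^{i+1})-Z_i(n)$, summed and combined with $\sum c_j\le n$, do yield $\zcl_k(P^n)\le kn-M$ in one pass over general monomials, which is a reasonable substitute for the paper's inequality (\ref{phi}).

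The lower bound, however, contains a genuine gap, and you have named it yourself: you never prove that $2^{i^*+1}\lfloor n/2^{i^*+1}\rfloor+(k-1)Z_{i^*}(n)$ is actually representable as $\sum_j c_j$ with each $c_j$ supported on the $0$-bits of $n$. Realizing the high part — whose binary $1$'s sit exactly at the $1$-bits of $n$ above position $i^*$ — from $0$-bits of $n$ with multiplicities at most $k-1$ is precisely the combinatorial heart of the theorem, and the ``greedy from the top, an unbuildable run would contradict maximality of $i^*$'' paragraph is a plan, not an argument. Your parenthetical for the easy case is also wrong: taking all $a_j=n$ when $M=0$ gives $\sum a_j=(k-1)n$, not $kn$; in fact $M=0$ already demands writing all of $n$ as such a sum, i.e.\ the full carrying problem. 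The paper avoids this entirely with one observation inside Lemma \ref{lem1}: any multiset of $2$-powers whose sum is at least $2^e$ contains a sub-multiset summing to exactly $2^e$, so stripping the top bit of $n$ reduces the representability question to the same question for $d=n-2^e$, giving $m_j(2^e+d)=\min(m_j(d)+2^e,\,j(2^{e+1}-1-n))$. If you want to complete your direct construction, that observation applied one leading bit at a time is the missing ingredient; without it (or an equivalent), the second half of your proof does not stand.
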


 It was shown in \cite{5} that, if $2^e\le n<2^{e+1}$, then $\zcl_2(P^n)=2^{e+1}-1$, which follows immediately from our Theorem \ref{thm1}.

 In Table \ref{T1}, we tabulate $\zcl_k(P^n)$ for $1\le n\le 17$ and $2\le k\le8$.

 \begin{table}[H]
\caption{Values of $\zcl_k(n)$}
\label{T1}
\begin{tabular}{c|ccccccccccccccccc}
$n$&$1$&$2$&$3$&$4$&$5$&$6$&$7$&$8$&$9$&$10$&$11$&$12$&$13$&$14$&$15$&$16$&$17$\\
\hline
$\zcl_2(n)$&$1$&$3$&$3$&$7$&$7$&$7$&$7$&$15$&$15$&$15$&$15$&$15$&$15$&$15$&$15$&$31$&$31$\\
$\zcl_3(n)$&$2$&$6$&$6$&$12$&$14$&$14$&$14$&$24$&$26$&$30$&$30$&$30$&$30$&$30$&$30$&$48$&$50$\\
$\zcl_4(n)$&$3$&$8$&$9$&$16$&$19$&$21$&$21$&$32$&$35$&$40$&$41$&$45$&$45$&$45$&$45$&$64$&$67$\\
$\zcl_5(n)$&$4$&$10$&$12$&$20$&$24$&$28$&$28$&$40$&$44$&$50$&$52$&$60$&$60$&$60$&$60$&$80$&$84$\\
$\zcl_6(n)$&$5$&$12$&$15$&$24$&$29$&$35$&$35$&$48$&$53$&$60$&$63$&$72$&$75$&$75$&$75$&$96$&$101$\\
$\zcl_7(n)$&$6$&$14$&$18$&$28$&$34$&$42$&$42$&$56$&$62$&$70$&$74$&$84$&$90$&$90$&$90$&$112$&$118$\\
$\zcl_8(n)$&$7$&$16$&$21$&$32$&$39$&$48$&$49$&$64$&$71$&$80$&$85$&$96$&$103$&$105$&$105$&$128$&$135$
\end{tabular}
\end{table}

The smallest value of $n$ for which two values of $i$ are significant in (\ref{maineq}) is $n=102=2^6+2^5+2^2+2^1$. With $i=2$, we have $7-k$ in the max, while with $i=6$, we have $127-25k$. Hence
$$\zcl_k(P^{102})=102k-\begin{cases}127-25k&2\le k\le 5\\
7-k&5\le k\le7\\
0&7\le k.\end{cases}$$

For all $k$ and $n$, $\TC_k(P^n)\le kn$ for dimensional reasons (\cite[Prop 2.2]{5}). Thus we obtain a sharp result $\TC_k(P^n)=kn$ whenever $\zcl_k(P^n)=kn$. Corollary \ref{cor} tells exactly when this is true.  Here is a simply-stated partial result.
\begin{prop}\label{lprop} If $n$ is even, then $\TC_k(P^n)=kn$ for $k\ge2^{\ell+1}-1$, where $\ell$ is the length of the longest string of consecutive $1$'s in the binary expansion of $n$.\end{prop}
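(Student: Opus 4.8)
The plan is to derive the proposition from Theorem \ref{mainthm} and the dimensional upper bound $\TC_k(P^n)\le kn$ of \cite[Prop 2.2]{5}. Since $kn\ge\TC_k(P^n)\ge\zcl_k(P^n)$ always holds, it suffices to show the hypotheses force $\zcl_k(P^n)=kn$; by Theorem \ref{mainthm} this amounts to showing that the quantity subtracted in (\ref{maineq}), namely
$$M:=\max\{2^{\nu(n+1)}-1,\ 2^{i+1}-1-k\cdot Z_i(n):i\in S(n)\},$$
equals $0$. Because $n$ is even, $n+1$ is odd, so $\nu(n+1)=0$ and the first entry of the maximum is $2^0-1=0$; hence $M\ge0$ automatically, and it remains only to prove $2^{i+1}-1\le k\cdot Z_i(n)$ for every $i\in S(n)$.

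Here I would use the hypothesis $k\ge 2^{\ell+1}-1$ to reduce to the stronger, $k$-free inequality
$$2^{i+1}-1\ \le\ (2^{\ell+1}-1)\,Z_i(n)\qquad(i\in S(n)).$$
Writing $J=\{j:0\le j\le i,\ \eps_j=0\}$, one has $Z_i(n)=\sum_{j\in J}2^j$ and therefore $(2^{\ell+1}-1)Z_i(n)=\sum_{j\in J}\bigl(2^j+2^{j+1}+\cdots+2^{j+\ell}\bigr)$. The crux is the combinatorial claim that the intervals $[j,j+\ell]$ for $j\in J$ cover $\{0,1,\ldots,i\}$. Granting this, every $2^m$ with $0\le m\le i$ appears in the double sum above, and since all its terms are positive the sum is at least $2^0+\cdots+2^i=2^{i+1}-1$, which is the desired inequality.

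To establish the covering claim I would fix $m\in\{0,\ldots,i\}$. If $\eps_m=0$ then $m\in J$ and $m\in[m,m+\ell]$. If $\eps_m=1$, then $m\ge1$ (as $\eps_0=0$, $n$ being even), and $0\in J$, so we may set $j=\max\{j'\in J:j'<m\}$. By maximality of $j$ we have $\eps_{j+1}=\cdots=\eps_m=1$, so $m$ lies in the maximal run of consecutive $1$'s in the binary expansion of $n$ that begins at position $j+1$. By the definition of $\ell$ this run has length at most $\ell$, hence is contained in $[j+1,j+\ell]$; thus $m\in[j,j+\ell]$ with $j\in J$. This proves the claim, and with it the proposition. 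The boundary cases need no extra argument: when $n=0$ or $S(n)=\emptyset$ one already has $M=0$, and when $n$ is even and positive $\ell\ge1$, so $k\ge 2^{\ell+1}-1\ge3$ and Theorem \ref{mainthm} does apply.

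I do not anticipate a real obstacle. The one point needing care is that the constant $2^{\ell+1}-1$ is sharp — for instance $n=6=110_2$ has $\ell=2$ and $Z_2(6)=1$, so $\zcl_k(P^6)=6k$ forces $k\ge7=2^{\ell+1}-1$ — so the estimate on $Z_i(n)$ must lose nothing, which is precisely why the covering argument is carried out in the exact form above rather than via a cruder bound such as $Z_i(n)\ge 2^{i-\ell}$.
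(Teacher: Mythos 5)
Your proposal is correct, and it follows the paper's reduction exactly up to the key inequality: both arguments invoke Theorem \ref{mainthm}, observe that $\nu(n+1)=0$ since $n$ is even, and reduce to showing $2^{i+1}-1\le(2^{\ell+1}-1)Z_i(n)$ for each $i\in S(n)$. Where you diverge is in how that inequality is verified. The paper splits into two cases according to whether the string of $1$'s beginning at position $i$ has length $j<\ell$ or $j=\ell$, and in each case writes down an explicit numerical lower bound for $Z_i(n)$ ($Z_i(n)\ge 2^{i-j}+1$ in the first case, $Z_i(n)\ge 1+\sum_\a 2^{i+1-\a(\ell+1)}$ in the second) from which the inequality follows by direct estimation; the second case is left somewhat terse (``follows easily''). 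You instead give a single uniform argument: expand $(2^{\ell+1}-1)Z_i(n)$ as $\sum_{j\in J}(2^j+\cdots+2^{j+\ell})$ over the set $J$ of zero-positions $\le i$, and prove the covering claim that the intervals $[j,j+\ell]$, $j\in J$, cover $\{0,\ldots,i\}$ --- using that $\eps_0=0$ (n even) and that any run of $1$'s has length at most $\ell$ --- so the sum dominates $2^0+\cdots+2^i=2^{i+1}-1$. Your covering argument is complete and correct (the maximality of $j$ does force $\eps_{j+1}=\cdots=\eps_m=1$, and the run-length bound gives $m\le j+\ell$), it avoids the case split, it makes visible exactly where evenness of $n$ and the definition of $\ell$ enter, and your sharpness remark for $n=6$ correctly explains why no cruder bound on $Z_i(n)$ would suffice. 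Your handling of the boundary cases ($S(n)=\emptyset$, and $k\ge3$ so that Theorem \ref{mainthm} applies) is also in order.
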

\begin{proof} We use Theorem \ref{mainthm}. We need to show that if $i\in S(n)$ begins a string of $j$ 1's with $j\le\ell$, then $2^{i+1}-1\le(2^{\ell+1}-1)Z_i(n)$. If $j<\ell$, then $Z_i(n)\ge2^{i-j}+1$, and the desired inequality reduces to $2^{i+1}+2^{i-j}\le2^{\ell+1+i-j}+2^{\ell+1}$, which is satisfied since $2^{\ell+1+i-j}$ is strictly greater than both $2^{i+1}$ and $2^{i-j}$.

If $j=\ell$, then
$$Z_i(n)\ge1+\sum_\a2^{i+1-\a(\ell+1)},$$
where $\a$ ranges over all positive integers such that $i+1-\a(\ell+1)>0$. This reflects the fact that the binary expansion of $n$ has a 0 starting in the $2^{i-\ell}$ position and at least every $\ell+1$ positions back from there, and also a 0 at the end since $n$ is even. The desired inequality follows easily from this.\end{proof}
Theorem \ref{mainthm} shows that $\zcl_k(P^n)<kn$ when $n$ is odd.
In the next proposition, we give complete information about when $\zcl_k(n)=kn$ if $k=3$ or $4$.
\begin{prop}\label{kprop} If $k=3$ or $4$, then $\zcl_k(P^n)=kn$ if and only if $n$ is even and the binary expansion of $n$ has no consecutive $1$'s.\end{prop}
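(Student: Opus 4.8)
The plan is to read Proposition \ref{kprop} directly off the formula \eqref{maineq}. Since $\zcl_k(P^n)\le kn$ in all cases, the equality $\zcl_k(P^n)=kn$ holds exactly when the maximum appearing in \eqref{maineq} is $0$, so I would determine when
$$\max\{2^{\nu(n+1)}-1,\ 2^{i+1}-1-k\cdot Z_i(n):i\in S(n)\}=0.$$
The entry $2^{\nu(n+1)}-1$ equals $0$ when $\nu(n+1)=0$, i.e.\ when $n$ is even, and is positive otherwise; this forces $n$ to be even and settles the ``only if'' direction for odd $n$.

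Assuming now that $n$ is even, the second step is to show that every entry $2^{i+1}-1-k\cdot Z_i(n)$ with $i\in S(n)$ is strictly positive when $k\in\{3,4\}$. The crucial elementary estimate is that $Z_i(n)\le 2^{i-1}-1$ for $i\in S(n)$: the hypothesis $i\in S(n)$ gives $\eps_i=\eps_{i-1}=1$, so the two leading terms of $Z_i(n)=\sum_{j=0}^{i}(1-\eps_j)2^j$ vanish and $Z_i(n)=\sum_{j=0}^{i-2}(1-\eps_j)2^j\le 2^{i-1}-1$. Hence for $2\le k\le 4$,
$$k\cdot Z_i(n)\le k(2^{i-1}-1)=k\cdot 2^{i-1}-k\le 2^{i+1}-k<2^{i+1}-1 ,$$
so the entry is positive. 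Consequently, for even $n$ the maximum in \eqref{maineq} vanishes if and only if $S(n)=\emptyset$.

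The last step is to identify $S(n)=\emptyset$ with the condition that the binary expansion of $n$ has no two consecutive $1$'s: if $\eps_j=\eps_{j+1}=1$ for some $j$, then the largest index $i$ with $\eps_i=\eps_{i-1}=1$ also satisfies $\eps_{i+1}=0$ (otherwise $i+1$ would qualify), so $i\in S(n)$; the reverse implication is immediate from the definition of $S(n)$. Assembling the three steps yields the stated equivalence. I do not anticipate a genuine obstacle: the whole argument rests on the bound $Z_i(n)\le 2^{i-1}-1$, and the only delicate point is the inequality chain $k(2^{i-1}-1)<2^{i+1}-1$, which is precisely where $k\le 4$ enters and which fails for $k\ge5$ --- consistent with the fact that the $k\ge 5$ even case needs the sharper analysis of Proposition \ref{lprop}.
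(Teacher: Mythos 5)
Your proposal is correct and is essentially a fleshed-out version of the paper's own (one-line) justification: read the claim off formula (\ref{maineq}) and use the bound $Z_i(n)\le 2^{i-1}-1$ for $i\in S(n)$, which is exactly the fact the paper cites. No gaps.
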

Proposition \ref{kprop} follows easily from Theorem \ref{mainthm} and the fact that if $i\in S(n)$, then $Z_i(n)\le2^{i-1}-1$.

The following recursive formula for $\zcl_k(P^n)$, which is interesting in its own right, is central to the proof of Theorem \ref{mainthm}.
It will be proved in Section \ref{sec2}.
\begin{thm}\label{thm1} Let $n=2^e+d$ with $0\le d<2^e$, and $k\ge2$.
If $z_k(n)=\zcl_k(P^n)$,
 then
$$z_k(n)=\min(z_k(d)+k2^e,(k-1)(2^{e+1}-1)), \text{ with }z_k(0)=0.$$
Equivalently, if $g_k(n)=kn-\zcl_k(P^n)$, then
\begin{equation}\label{zeq}g_k(n)=\max(g_k(d),kn-(k-1)(2^{e+1}-1)), \text{ with }g_k(0)=0.\end{equation}
\end{thm}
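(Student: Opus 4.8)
The plan is to work directly with the polynomial characterization of $\zcl_k(P^n)$ recalled from \cite{5}, namely that $\zcl_k(P^n)$ is the largest value of $a_1+\cdots+a_{k-1}$ for which the monomial $\prod_{j=1}^{k-1}(x_j+x_k)^{a_j}$ is nonzero in $\zt[x_1,\ldots,x_k]/(x_1^{n+1},\ldots,x_k^{n+1})$. Write $n=2^e+d$ with $0\le d<2^e$. I would split the argument into the two inequalities $z_k(n)\le\min(z_k(d)+k2^e,(k-1)(2^{e+1}-1))$ and $z_k(n)\ge\min(z_k(d)+k2^e,(k-1)(2^{e+1}-1))$.

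For the upper bound, the constraint $z_k(n)\le(k-1)(2^{e+1}-1)$ is immediate: if any $a_j\ge 2^{e+1}$ then, since each $x_j^{n+1}=0$ and $n+1\le 2^{e+1}$, Lucas's theorem forces $(x_j+x_k)^{a_j}$ to be a sum of monomials each divisible by some $x_i^{2^{e+1}}$ (splitting the binomial according to the bit of $a_j$ in position $\ge e+1$), hence the whole product vanishes; so every $a_j\le 2^{e+1}-1$. For the bound $z_k(n)\le z_k(d)+k2^e$, I would write $a_j=b_j2^e+c_j$... more carefully, observe that a nonzero monomial in the quotient must, after expanding, have total degree in each variable at most $n$, and use the $2$-adic decomposition $(x_j+x_k)^{a_j}=(x_j+x_k)^{a_j-2^e\lfloor a_j/2^e\rfloor}\cdot\big((x_j^{2^e}+x_k^{2^e})^{\lfloor a_j/2^e\rfloor}+\cdots\big)$ — the cleaner route is to pass to the ``splitting'' $R_n:=\zt[x]/(x^{n+1})$ and use that $R_n$ injects appropriately; in the end the top $2^e$-block contributes at most $k2^e$ to $\sum a_j$ and the residual exponents give a nonzero monomial over $R_d$, so $\sum a_j\le z_k(d)+k2^e$.

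For the lower bound I would construct explicit exponents realizing $\min(z_k(d)+k2^e,(k-1)(2^{e+1}-1))$. In the regime where $z_k(d)+k2^e\le(k-1)(2^{e+1}-1)$, take an optimal tuple $(c_1,\ldots,c_{k-1})$ for $d$ with $\sum c_j=z_k(d)$ and set $a_j=c_j+2^e$ for each $j<k$; one checks using Lucas (the bit in position $e$ of each $a_j$ is $1$ since $c_j\le n<2^e$ forces... actually $c_j$ can exceed $2^e$, so one must be a little careful and instead add the $2^e$'s to variables cyclically) that the resulting product is nonzero, giving $z_k(n)\ge z_k(d)+k2^e$. In the complementary regime one distributes exponents to make each $a_j$ as close to $2^{e+1}-1$ as the nonvanishing allows; the key nonvanishing input is again that the monomial $x_1^{2^{e+1}-1}\cdots x_{k-1}^{2^{e+1}-1}$-type terms survive modulo $x_i^{2^{e+1}}$. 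The equivalent reformulation \eqref{zeq} in terms of $g_k(n)=kn-\zcl_k(P^n)$ then follows by subtracting from $kn=kd+k2^e$ and using $z_k(d)+k2^e\le(k-1)(2^{e+1}-1)\iff g_k(d)\ge kn-(k-1)(2^{e+1}-1)$.

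The main obstacle I anticipate is the bookkeeping in the lower-bound construction: proving nonvanishing of the explicit monomial requires a careful Lucas-theorem analysis of the carries when one adds $2^e$ (or $2^{e+1}-1-c_j$) to exponents that may themselves have complicated binary expansions, and one must choose which variables receive which increments so that no forbidden $x_i^{2^{e+1}}$ appears after full expansion. Handling the boundary case $d=0$ (so $n=2^e$) and small $k$ separately, and confirming that the two constructions agree at the crossover $z_k(d)+k2^e=(k-1)(2^{e+1}-1)$, will also need attention.
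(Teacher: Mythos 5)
There is a genuine gap, and it sits exactly at the heart of the recursion: the term $z_k(d)+k2^e$ carries a factor $k$, not $k-1$, because the $k$-th variable $x_k$ also gains $2^e$ of capacity in passing from $d$ to $n=2^e+d$, and neither of your two main steps accounts for this. On the lower-bound side, taking an optimal tuple $(c_1,\ldots,c_{k-1})$ for $d$ and setting $a_j=c_j+2^e$ raises $\sum a_j$ by only $(k-1)2^e$, so the construction realizes at best $z_k(d)+(k-1)2^e$ and falls short of the target by $2^e$. Already for $n=2$, $k=4$, $d=0$ the optimum is $8$, achieved by $(a_1,a_2,a_3)=(3,3,2)$, whereas ``add $2^e=2$ to each $c_j=0$'' gives only $6$; the increments must be distributed non-uniformly, and deciding how requires precisely the Lucas/carry analysis you defer. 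The upper bound $z_k(n)\le z_k(d)+k2^e$ has the mirror-image problem: writing $a_j=\eps_j2^e+\beta_j$ with $\eps_j\in\{0,1\}$ (legitimate, since your first bound gives $a_j\le 2^{e+1}-1$), the ``top block'' contributes $\sum\eps_j2^e\le(k-1)2^e$, so if the residual exponents really gave a nonzero monomial over $R_d$ you would be proving $\sum a_j\le z_k(d)+(k-1)2^e$, which is false ($8\not\le 0+6$ in the same example). In fact the residual product need not be nonzero over $R_d$ at all, since the low-order parts of the exponents can lie strictly between $d$ and $2^e$. Your bound $z_k(n)\le(k-1)(2^{e+1}-1)$ and the passage to the $g_k$ reformulation are correct, but the core of the theorem is exactly the part left unproved.

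For comparison, the paper sidesteps both difficulties by changing variables to $h_k(n)=\zcl_k(P^n)-(k-1)n$ and first restricting to monomials $x_1^n\cdots x_{k-1}^nx_k^\ell$ with $a_i=n+b_i$. Lucas's theorem converts nonvanishing into the condition $P(b_i)\subset Z(n)$, so the whole problem becomes: maximize the sum of a sub-multiset of $k-1$ copies of $Z(n)$ subject to that sum being at most $n$. The recursion is proved for this combinatorial quantity $m_{k-1}(n)$ (Lemma \ref{lem1}), where the extra $2^e$ appears naturally as the element $\{2^{e-1},2^{e-1}\}$ or a subset $V$ of sum $2^e$; a separate monotonicity inequality for $\phi$ then shows that general monomials (your $\eps_i>0$ case) do no better. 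If you want to keep your direct approach, you would essentially need to reprove this multiset lemma, so I would recommend adopting that reduction rather than manipulating the polynomials head-on.
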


We now use Theorem \ref{thm1} to prove Theorem \ref{mainthm}.
\begin{proof}[Proof of Theorem \ref{mainthm}] We will prove that $g_k(n)$ of Theorem \ref{thm1} satisfies \begin{equation}\label{gmain}g_k(n)=\max\{2^{\nu(n+1)}-1,2^{i+1}-1-k Z_i(n):i\in S(n)\}\end{equation}
if $k\ge3$, which is clearly equivalent to Theorem \ref{mainthm}.
The proof is by induction, using the recursive formula  (\ref{zeq}) for $g_k(n)$.
Let $n=2^e+d$ with $0\le d<2^e$.

{\bf Case 1:} $d=0$. Then $n=2^e$ and by (\ref{zeq}) we have $g_k(n)=\max(0,k2^e-(k-1)(2^{e+1}-1))$. If $e=0$, this equals $1$, while if $e>0$, it equals $0$, since $k\ge3$.
These agree with the claimed answer $2^{\nu(n+1)}-1$, since $S(2^e)=\emptyset$.

{\bf Case 2:} $0<d<2^{e-1}$. Here $\nu(n+1)=\nu(d+1)$, $S(n)=S(d)$, and $Z_i(n)=Z_i(d)$ for any $i\in S(d)$. Substituting (\ref{gmain}) with $n$ replaced by $d$ into (\ref{zeq}), we obtain
$$g_k(n)=\max\{2^{\nu(n+1)}-1,2^{i+1}-1-kZ_i(n):i\in S(n), kn-(k-1)(2^{e+1}-1)\}.$$
We will be done once we show that $kn-(k-1)(2^{e+1}-1)$ is $\le$ one of the other entries, and so may be omitted.
If $i$ is the largest element of $S(n)$, we will show that $kn-(k-1)(2^{e+1}-1)\le2^{i+1}-1-kZ_i(n)$, i.e., \begin{equation}\label{need}k n_i\le(k-1)(2^{e+1}-2^{i+1}),\end{equation} where $n_i=n-(2^{i+1}-1-Z_i(n))$ is the sum of the 2-powers in $n$ which are greater than $2^i$. The largest of these is $2^e$, and no two consecutive values of $i$ appear in this sum, hence $n_i\le \sum2^j$, taken over $j\equiv e\ (2)$ and $i+2\le j\le e$. If $k=3$, (\ref{need}) is true because the above description of $n_i$ implies that $3n_i\le 2(2^{e+1}-2^{i+1})$, while for larger $k$, it is true since $\frac k{k-1}<\frac32$. If $S(n)$ is empty, then $kn- (k-1)(2^{e+1}-1)\le 2^{\nu(n+1)}-1$ by a similar argument, since $n\le 2^e+2^{e-2}+2^{e-4}+\cdots$, so $3n\le 2(2^{e+1}-1)$, and values of $k>3$ follow as before.

{\bf Case 3:} $d\ge2^{e-1}$. If $e-1\in S(d)$, then it is replaced by $e$ in $S(n)$, while other elements of $S(d)$ form the rest of $S(n)$. If $e-1\not\in S(d)$, then $S(n)=S(d)\cup\{e\}$. If $i\in S(n)-\{e\}$, then $Z_i(n)=Z_i(d)$, so its contribution  to the set of elements whose max equals $g_k(n)$ is $2^{i+1}-1-kZ_i(n)$, as desired. For $i=e$, the claimed term is $2^{e+1}-1-kZ_e(n)=kn-(k-1)(2^{e+1}-1)$, which is present by the induction from (\ref{zeq}). If $e-1\in S(d)$, then the $i=e-1$ term in the max for $g_k(d)$ is $2^e-1-kZ_i(n)$ and contributes to $g_k(n)$ less than the term described in the preceding sentence, and hence cannot contribute to the max. The $2^{\nu(n+1)}-1$ term is obtained from the induction since $\nu(n+1)=\nu(d+1)$.
\end{proof}

The author wishes to thank Jesus Gonz\'alez for many useful suggestions.
\section{Recursive formulas}\label{sec2}
In this section, we prove Theorem \ref{thm1} and the following variant.
\begin{thm}\label{thm2} Let $n=2^e+d$ with $0\le d<2^e$, and $k\ge2$. If $h_k(n)=\zcl_k(P^n)-(k-1)n$, then
\begin{equation}h_k(n)=\min(h_k(d)+2^e,(k-1)(2^{e+1}-1-n)), \text{ with }h_k(0)=0.\label{heq}\end{equation}

\end{thm}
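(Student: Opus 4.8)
The plan is to prove Theorem \ref{thm2} in parallel with Theorem \ref{thm1}, since the two recursions are essentially algebraic reformulations of the same statement about $\zcl_k(P^n)$. First I would observe the relations between the various normalizations: if $z_k(n)=\zcl_k(P^n)$, then $g_k(n)=kn-z_k(n)$ and $h_k(n)=z_k(n)-(k-1)n=n-g_k(n)$. Thus $g_k(n)+h_k(n)=n$, and the recursion (\ref{heq}) should follow formally from (\ref{zeq}) once Theorem \ref{thm1} is established. Concretely, substituting $h_k(m)=m-g_k(m)$ into (\ref{heq}) and using $n=2^e+d$, the claim $h_k(n)=\min(h_k(d)+2^e,(k-1)(2^{e+1}-1-n))$ becomes $n-g_k(n)=\min(d-g_k(d)+2^e,(k-1)(2^{e+1}-1)-(k-1)n)$; since $d+2^e=n$, the first term inside the min is $n-g_k(d)$, and subtracting from $n$ turns $\min$ into $\max$, yielding exactly $g_k(n)=\max(g_k(d),kn-(k-1)(2^{e+1}-1))$, which is (\ref{zeq}). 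So Theorem \ref{thm2} is equivalent to Theorem \ref{thm1}, and it suffices to prove the latter.

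For Theorem \ref{thm1} itself, the approach is to analyze the combinatorial formula
$$z_k(n)=\max\{a_1+\cdots+a_{k-1}:(x_1+x_k)^{a_1}\cdots(x_{k-1}+x_k)^{a_{k-1}}\ne0\}$$
in $\zt[x_1,\ldots,x_k]/(x_1^{n+1},\ldots,x_k^{n+1})$, established in \cite{5}. Write $n=2^e+d$ with $0\le d<2^e$. The upper bound $z_k(n)\le(k-1)(2^{e+1}-1)$ comes from the fact that in any single variable $x_j$, the total degree cannot exceed $n$, and more sharply one gets the bound $2^{e+1}-1$ per pair by a carry/Lucas-theorem argument on the binomial coefficients appearing in the expansion of $(x_j+x_k)^{a_j}$: nonvanishing of the product forces each $a_j$ to be achievable, and the relevant monomials have $x_k$-exponent bounded, giving the $(k-1)$ copies of $2^{e+1}-1$. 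The upper bound $z_k(n)\le z_k(d)+k2^e$ comes from splitting off the top bit: write each $a_j=2^e+b_j$ (when $a_j\ge2^e$) and use Lucas to factor $(x_j+x_k)^{a_j}=(x_j+x_k)^{2^e}(x_j+x_k)^{b_j}=(x_j^{2^e}+x_k^{2^e})(x_j+x_k)^{b_j}$, reducing a nonzero product over $P^n$ to a nonzero product over $P^d$ after peeling off the $2^e$-th powers, with a total of at most $k2^e$ in degree contributed by the top bits (one $2^e$ in each of the $k$ variables, since $x_k$ can absorb at most $2^e$ as well).

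For the matching lower bound, I would construct an explicit monomial: take the optimal configuration for $P^d$ realizing $z_k(d)$, multiply each $(x_j+x_k)$ factor by $(x_j+x_k)^{2^e}=x_j^{2^e}+x_k^{2^e}$, and check that one can choose cross-terms so that every variable's exponent stays $\le n=2^e+d$ and the resulting coefficient is odd — this uses that the $P^d$-monomial already has all exponents $\le d$, so adding a single $2^e$ in each slot (distributed appropriately, with $x_k$ receiving its share) keeps everything admissible; this realizes $z_k(d)+k2^e$ when that is the smaller quantity. When instead $(k-1)(2^{e+1}-1)$ is smaller, one realizes it directly by the standard construction putting $a_j$ close to $2^{e+1}-1$ for each $j<k$ and balancing $x_k$. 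The main obstacle I anticipate is the bookkeeping in the lower-bound construction: verifying that the chosen cross-terms in the product $\prod_j(x_j^{2^e}+x_k^{2^e})\cdot(\text{stuff})$ actually produce an odd coefficient on a monomial with all exponents $\le n$, which requires a careful Lucas/carry analysis to ensure no hidden cancellation and that the $x_k$-budget of $n$ is not exceeded when many factors contribute their $x_k^{2^e}$ term simultaneously. The edge cases $d=0$ and small $e$ should be checked separately but are straightforward.
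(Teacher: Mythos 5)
Your opening reduction is fine: since $g_k(n)+h_k(n)=n$, the recursion (\ref{heq}) for $h_k$ is formally equivalent to (\ref{zeq}) for $g_k$, and this matches the paper's own remark that the three normalizations are interchangeable. The problem is that all the content lies in proving the recursion itself, and both halves of your argument for it break down. For the upper bound $z_k(n)\le z_k(d)+k2^e$, the "peel off the top bit" step does not reduce to a nonzero product over $P^d$. Take $n=5=2^2+1$, $k=3$, $a_1=a_2=7$: the product $(x_1+x_3)^7(x_2+x_3)^7$ contains $x_1^5x_2^5x_3^4$ with coefficient $\binom 75^2$, odd, so it is nonzero over $P^5$ and realizes $z_3(5)=14$. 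Peeling gives $b_1=b_2=3$, but $(x_1+x_3)^3(x_2+x_3)^3$ is identically zero over $P^1$ (total degree $6$ cannot be distributed among three variables with exponents $\le 1$), and $\sum b_j=6$ vastly exceeds $z_3(1)=2$. The residual exponents are only bounded by $2^e$, not by $d+1$, so the term-by-term accounting "$\le k2^e$ from top bits plus $\le z_k(d)$ from the rest" is false even though the final inequality happens to hold.

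The lower bound construction has a matching defect: multiplying each of the $k-1$ factors by $(x_j+x_k)^{2^e}$ adds only $(k-1)2^e$ to $\sum a_j$, not $k2^e$, and you cannot recover the missing $2^e$ by giving one factor an extra $2^{e+1}$, since any $a_j\ge2^{e+1}$ forces some variable's exponent to be $\ge2^{e+1}>n$ by Lucas. In the example above the optimal exponents jump from $a_j=1$ over $P^1$ to $a_j=7=1+2+4$ over $P^5$: the construction must also switch on all the bit positions strictly between $\lg d$ and $e$, which your recipe never does. The paper sidesteps both difficulties by first normalizing to monomials $x_1^n\cdots x_{k-1}^nx_k^\ell$ with $\ell\le n$, translating via Lucas into the purely combinatorial problem of maximizing $\|T\|\le n$ over sub-multisets $T$ of $k-1$ copies of the set $Z(n)$ of $2$-powers at the zero positions of $n$ (this is $m_{k-1}(n)$), proving the recursion at that level in Lemma \ref{lem1}, and only then showing via the perturbation inequality (\ref{phi}) that non-diagonal monomials cannot do better. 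Some such reformulation, or a substantially more careful carry analysis, is needed; the induction cannot be run naively on the polynomial ring by stripping the leading binary digit.
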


\begin{proof}[Proof of Theorems \ref{thm1} and \ref{thm2}] It is elementary to check that the  formulas for $z_k$, $g_k$, and $h_k$ are equivalent to one another.
We prove (\ref{heq}). We first look for nonzero monomials in $(x_1+x_k)^{a_1}\cdots(x_{k-1}+x_k)^{a_{k-1}}$ of the form $x_1^n\cdots x_{k-1}^nx_k^\ell$ with $\ell\le n$. Letting $a_i=n+b_i$, the analogue of $h_k(n)$ for such monomials is given by
\begin{equation}\label{hdef}\hti_k(n)=\max\{\sum_{i=1}^{k-1}b_i:\tbinom{n+b_1}n\cdots\tbinom{n+b_{k-1}}n\text{ is odd and }\sum_{i=1}^{k-1}b_i\le n\},\end{equation}
since $\sum b_i$ is the exponent of $x_k$.
We will begin by proving
\begin{equation}\label{hti}\hti_k(n)=\min(\hti_k(d)+2^e,(k-1)(2^{e+1}-1-n)).\end{equation}

For a nonzero integer $m$, let $Z(m)$ (resp.~$P(m)$) denote the set of 2-powers corresponding to the 0's (resp.~1's) in the binary expansion of $m$, with $Z(0)=P(0)=\emptyset$. By Lucas's Theorem, $\binom{n+b_i}n$ is odd iff $P(b_i)\subset Z(n)$. Note that the integers $Z_i(n)$ considered earlier are sums of elements of subsets of $Z(n)$.

For a multiset $S$, let $\|S\|$ denote the sum of its elements, and let
$$\phi(S,n)=\max\{\|T\|\le n:T\subset S\}.$$ Note that $\|Z(n)\|=2^{\lg(n)+1}-1-n$, where $\lg(n)=\lfloor\log_2(n)\rfloor$, ($\lg(0)=-1$).
 Let $Z(n)^j$ denote the multiset consisting of $j$ copies of $Z(n)$, and let $$m_j(n)=\phi(Z(n)^j,n).$$
Then, from (\ref{hdef}), we obtain the key equation $\hti_k(n)=m_{k-1}(n)$. Thus (\ref{hti}) follows from Lemma \ref{lem1} below.

\begin{lem}\label{lem1} If $n=2^e+d$ with $0\le d<2^e$, and $j\ge1$, then
$$m_j(n)=\min(m_j(d)+2^e,j(2^{e+1}-1-n)).$$
\end{lem}
\begin{proof} The result is clear if $j=1$ since $2^{e+1}-1-n<2^e$, so we assume $j\ge2$. Let $S\subset Z(d)^j$ satisfy $\|S\|=m_j(d)$.

First assume $d<2^{e-1}$. Then $2^{e-1}\in Z(n)$. Let $T=S\cup\{2^{e-1},2^{e-1}\}$. No other subset of $Z(n)^j$ can have larger sum than $T$ which is $\le n$ due to maximality of $\|S\|$ and the fact that the 2-powers in $Z(n)^j-Z(d)^j$ are larger than those in $Z(d)^j$. Thus $m_j(n)=m_j(d)+2^e$ in this case, and this is $\le j(2^{e+1}-1-n)=\|Z(n)^j\|$.

If, on the other hand,  $d\ge 2^{e-1}$, then $Z(d)^j=Z(n)^j$. If $\|Z(n)^j-S\|<2^e$, then let $T=Z(n)^j$
with $\|T\|=j(2^{e+1}-1-n)$, as large as it could possibly be, and less than $m_j(d)+2^e$.  Otherwise, since any multiset of 2-powers whose sum is $\ge 2^e$ has a subset whose sum equals $2^e$, we can let $T=S\cup V$, where $V$ is a subset of $Z(n)^j-S$ with $\|V\|=2^e$. As before, no subset of $Z(n)^j$ can have size greater than that.

\end{proof}

Now we wish to consider more general monomials.
We claim that for any multiset $S$ and positive integers $m$ and $n$,
\begin{equation}\phi(Z(m-1)\cup S, n)\le\phi(Z(m)\cup S, n)+1.\label{phi}\end{equation}
This follows from the fact that subtracting 1 from $m$ can affect $Z(m)$ by adding 1, or changing $1, 2,\ldots,2^{t-1}$ to $2^t$. These changes cannot add more than 1 to the largest subset of size $\le n$.
We show now that this  implies that $h_k(n)=m_{k-1}(n)=\hti_k(n)$, and hence (\ref{heq}) follows from (\ref{hti}).

Suppose that $x_1^{n-\eps_1}\cdots x_{k-1}^{n-\eps_{k-1}}x_k^\ell$ with $\eps_i\ge0$ and $\ell\le n$ is a nonzero monomial in the expansion of $(x_1+x_k)^{n+b_1}\cdots(x_{k-1}+x_k)^{n+b_{k-1}}$. We wish to show that $\sum b_i\le m_{k-1}(n)$. It follows from (\ref{phi}) that
$$\phi\bigl(\bigcup_{i=1}^{k-1}Z(n-\eps_i),n\bigr)\le\phi(Z(n)^{k-1},n)+\sum\eps_i=m_{k-1}(n)+\sum\eps_i.$$
The odd binomial coefficients $\binom{n+b_i}{n-\eps_i}$ imply that $P(b_i+\eps_i)\subset Z(n-\eps_i)$. Thus
\begin{equation}\label{phi2}\phi\bigl(\bigcup_{i=1}^{k-1}P(b_i+\eps_i),n\bigr)\le m_{k-1}(n)+\sum\eps_i.\end{equation}
Since $\|P(b_i+\eps_i)\|=b_i+\eps_i$ and $\sum(b_i+\eps_i)\le n$, the left hand side of (\ref{phi2}) equals $\sum(b_i+\eps_i)$, hence $\sum b_i\le m_{k-1}(n)$, as desired.
\end{proof}

\section{Examples and comparisons}\label{explsec}
In this section, we examine some special cases of our results (in Propositions \ref{eis} and \ref{3prop}) and make  comparisons with some work in \cite{5}.

The numbers $z_3(n)=\zcl_3(P^n)$ are 1 less than a sequence which was  listed by the author as A290649 at \cite{OEIS} in August 2017.
They can be characterized as in Proposition \ref{eis}, the proof of which is a straightforward application of the recursive formula
$$z_3(2^e+d)=\min(z_3(d)+3\cdot2^e,2(2^{e+1}-1))\text{ for }0\le d<2^e,$$
from Theorem \ref{thm1}.
\begin{prop}\label{eis} For $n\ge0$, $\zcl_3(n)$ is the largest even integer $z$ satisfying $z\le 3n$ and $\binom{z+1}n\equiv1\ (2)$.\end{prop}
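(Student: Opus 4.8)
The plan is to establish Proposition \ref{eis} by induction on $n$, exploiting the recursive formula
$$z_3(2^e+d)=\min(z_3(d)+3\cdot2^e,\;2(2^{e+1}-1))\quad(0\le d<2^e)$$
coming from Theorem \ref{thm1}, and comparing it against the characterization in terms of the binomial coefficient. First I would record the base case $n=0$: $z_3(0)=0$ is even, $0\le 0$, and $\binom{1}{0}=1$ is odd, so the claim holds. For the inductive step, write $n=2^e+d$ with $0\le d<2^e$, and set $z^{\ast}$ to be the largest even integer with $z^{\ast}\le 3n$ and $\binom{z^{\ast}+1}{n}\equiv1$; the goal is $z^{\ast}=z_3(n)$.

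The heart of the argument is a Lucas-theorem translation. By Lucas, $\binom{z+1}{n}$ is odd iff each binary digit of $n$ is $\le$ the corresponding digit of $z+1$, i.e.\ iff $P(n)\subset P(z+1)$, equivalently iff $Z(z+1)\subset Z(n)$ in the notation of Section \ref{sec2}. So I am asking: what is the largest even $z\le 3n$ with $P(n)\subset P(z+1)$? Equivalently, writing $w=z+1$ (odd), the largest odd $w\le 3n+1$ with $P(n)\subset P(w)$. The key step is then to show this $w$ obeys the same recursion. Since $n=2^e+d$ has $2^e$ as its top bit, any $w$ with $P(n)\subset P(w)$ has $2^e\in P(w)$; writing $w=2^e+w'$ or $w=2^{e+1}+w''$ according to whether $w<2^{e+1}$ or not, and noting $P(d)\subset P(w')$ in the first case, one matches the two branches of the min: the branch $z_3(d)+3\cdot 2^e$ corresponds to keeping the $2^e$ bit and optimizing below it (giving $w = 2^e + (\text{optimal for }d)$, whence $z = z_3(d)+3\cdot 2^e$ since adding a bit equal to $2^e$ to $w$ adds $2^e$ to $z$ but the constraint budget $3n = 3d + 3\cdot2^e$ also shifts), and the branch $2(2^{e+1}-1)$ is the absolute ceiling $z\le 2^{e+2}-2$ forced once $w$ is pushed up to have a bit in position $e+1$ or beyond. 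I would verify both inequalities $z^{\ast}\ge\min(\cdots)$ and $z^{\ast}\le\min(\cdots)$ separately: the first by exhibiting, from the inductive optimum for $d$, an explicit admissible $z$ realizing each branch; the second by showing any admissible $z$ for $n$ either restricts to an admissible $z-3\cdot2^e$ for $d$ (when the relevant high bits of $z+1$ sit exactly at position $e$) or is bounded by $2(2^{e+1}-1)$ (when they spill higher, using $z\le 3n< 3\cdot2^{e+1}$ together with the bit constraint to cap $z$ at $2^{e+2}-2$).

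The step I expect to be the main obstacle is the bookkeeping around how the "$\le 3n$" constraint interacts with carrying in the binary expansion of $z+1$: passing from $n$ to $d$ changes the bound from $3n$ to $3d$, a drop of $3\cdot2^e$, while the bit at position $e$ in $z+1$ only accounts for a drop of $2^e$ in $z$ and $2^e$ in $z+1$, so one must check carefully that the "extra" $2^{e+1}$ of slack is exactly what allows (or forbids) a bit of $z+1$ in position $e+1$ — this is precisely where the ceiling $2(2^{e+1}-1)=2^{e+2}-2$ enters, and getting the parity ($z$ even, $z+1$ odd) to line up with these carries is the delicate part. Once that is pinned down, the proof is a clean induction. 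A possible shortcut, which I would try first, is to instead verify directly that the sequence defined by $a_n=$ "largest even $z\le 3n$ with $\binom{z+1}{n}$ odd" satisfies the recursion of Theorem \ref{thm1}, since that recursion together with the base value $a_0=0$ uniquely determines the sequence; this reduces everything to the single recursive identity and avoids re-deriving properties of $z_3$ from scratch.
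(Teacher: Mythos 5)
Your overall plan --- induct on $n$ and check that $a_n:=\max\{z\ \text{even}:z\le 3n,\ \tbinom{z+1}{n}\equiv1\ (2)\}$ satisfies the recursion of Theorem \ref{thm1}, via the Lucas translation $P(n)\subset P(z+1)$ --- is exactly what the paper intends (it gives no more detail than ``a straightforward application of the recursive formula''), and the uniqueness-of-the-recursion shortcut you mention at the end is the right framing. But the one concrete thing you say about realizing the branch $z_3(d)+3\cdot2^e$ is off: taking $w=2^e+(\text{optimal for }d)$ only produces $z=a_d+2^e$, which falls short of the target by $2^{e+1}$ --- precisely the discrepancy you flag as ``the main obstacle'' without resolving it. The correct witness adds bits at positions $e$ \emph{and} $e+1$: with $w_d=a_d+1$ the optimal odd witness for $d$, take $w=w_d+2^e+2^{e+1}=w_d+3\cdot2^e$. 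This is carry-free exactly when $w_d<2^e$, i.e.\ $a_d\le 2^e-2$, which is the same as the condition $a_d+3\cdot2^e\le 2^{e+2}-2$ for that branch of the min to be active; in the complementary case $a_d\ge 2^e-2$ one takes $w=2^{e+2}-1$ (all bits set), and the needed bound $z=2^{e+2}-2\le 3n$ follows from $2^e-2\le a_d\le 3d$.

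The remaining verifications then go through as you outline. For $a_n\le 2^{e+2}-2$: if $w=z+1$ had a bit in position $\ge e+2$, then $P(n)\subset P(w)$ forces $w\ge n+2^{e+2}$, hence $z>3n$ since $n<2^{e+1}$; so $w<2^{e+2}$. For $a_n\le a_d+3\cdot2^e$: if $2^{e+1}\notin P(w)$ then $w<2^{e+1}$ and $z\le2^{e+1}-2<a_d+3\cdot2^e$ trivially, while if $2^{e+1}\in P(w)$ then $w'=w-3\cdot2^e$ is odd with $P(d)\subset P(w')$ and $w'-1=z-3\cdot2^e\le 3d$, so $z-3\cdot2^e\le a_d$. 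Parity causes no trouble once $e\ge1$ (adding or removing bits in positions $\ge1$ preserves oddness of $w$), so only $n=0,1$ need direct checking. With the witness corrected as above, your induction closes.
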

\noindent We have not found similar characterizations for $z_k(n)$ when $k>3$.

In \cite[Thm 5.7]{5}, it is shown that our $g_k(n)$ in Theorem \ref{thm1} is a decreasing function of $k$, and achieves a stable value of $2^{\nu(n+1)}-1$ for sufficiently large $k$. They defined $s(n)$ to be the minimal value of $k$ such that $g_k(n)=2^{\nu(n+1)}-1$. We obtain a  formula for the precise value of $s(n)$ in our next result.

Let $S'(n)$ denote the set of integers $i$ such that the $2^i$ position begins a string of two or more consecutive 1's in the binary expansion of $n$ which stops prior to the $2^0$ position. For example,
$S'(187)=\{5\}$ since its binary expansion is $10111011$.
\begin{prop} Let $s(-)$ and $S'(-)$ be the functions just described.   Then
$$s(n)=\begin{cases}2&\text{if }n+1\text{ is a $2$-power}\\
3&\text{if }n+1\text{ is not a $2$-power and }S'(n)=\emptyset\\
\max\bigl\{\biggl\lceil\dfrac{2^{i+1}-2^{\nu(n+1)}}{Z_i(n)}\biggr\rceil:i\in S'(n)\bigr\}&\text{otherwise}.\end{cases}$$
\label{sprop}\end{prop}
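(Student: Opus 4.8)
The plan is to prove Proposition \ref{sprop} by directly analyzing when each term in the max defining $g_k(n)$ (see (\ref{gmain})) drops to the stable value $2^{\nu(n+1)}-1$. Recall from Theorem \ref{mainthm} that $g_k(n)=\max\{2^{\nu(n+1)}-1,\,2^{i+1}-1-kZ_i(n):i\in S(n)\}$. Since $g_k(n)$ is decreasing in $k$ (by \cite[Thm 5.7]{5}) and $2^{\nu(n+1)}-1$ is a constant, $s(n)$ is exactly the least $k\ge 2$ for which every term $2^{i+1}-1-kZ_i(n)$ with $i\in S(n)$ is $\le 2^{\nu(n+1)}-1$. Solving $2^{i+1}-1-kZ_i(n)\le 2^{\nu(n+1)}-1$ for $k$ gives $k\ge (2^{i+1}-2^{\nu(n+1)})/Z_i(n)$, so the least such integer $k$ forcing the $i$-term down is $\lceil (2^{i+1}-2^{\nu(n+1)})/Z_i(n)\rceil$. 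Taking the max over $i\in S(n)$ gives a candidate formula; the work is to reconcile $S(n)$ with $S'(n)$ and to handle the low-$k$ edge cases.

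First I would dispose of the two special cases. If $n+1$ is a $2$-power, then $n=2^{e+1}-1$ is a string of $1$'s, so $Z_i(n)=0$ for all relevant $i$ and in fact $S(n)=\emptyset$ (there is no $0$ following the string); moreover $\nu(n+1)=e+1$ and $2^{\nu(n+1)}-1=n$, which is already the stable value. One checks $g_2(n)=n$ here (e.g.\ from $\zcl_2(P^n)=2^{e+1}-1=n$), so $s(n)=2$. If $n+1$ is not a $2$-power but $S'(n)=\emptyset$, the only way $S(n)$ can be nonempty is via a string of $1$'s that runs all the way down to the $2^0$ position; such a string contributes an index $i$ with $Z_i(n)$ equal to $\|Z(n\bmod 2^{i+1})\|$ but, crucially, that term is already accounted for by the $2^{\nu(n+1)}-1$ piece — here one must check that for $k=3$ we have $2^{i+1}-1-3Z_i(n)\le 2^{\nu(n+1)}-1$, using that $n$ ends in a block of $1$'s so $\nu(n+1)$ is the length of that terminal block and $Z_i(n)\ge 2^{i+1-\nu(n+1)}-1+\cdots$ exactly as in the proof of Proposition \ref{lprop}. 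Since $g_2(n)>2^{\nu(n+1)}-1$ when $n+1$ is not a $2$-power (because $g_2(n)=2n-(2^{\lg(n)+1}-1)$ and one checks this strictly exceeds $2^{\nu(n+1)}-1$ unless $n$ is all $1$'s), we get $s(n)=3$.

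For the main case I would argue that $S(n)$ and $S'(n)$ differ only by the possible presence of one extra index coming from a terminal string of $1$'s in $n$, and that this extra index never controls the max once $k\ge 3$ (again by the Proposition \ref{lprop}-style estimate on $Z_i(n)$, which shows its term is $\le 2^{\nu(n+1)}-1$ already at $k=3$). Hence for $k\ge 3$ the active terms above the stable value are exactly those with $i\in S'(n)$, and the least $k$ making all of them $\le 2^{\nu(n+1)}-1$ is $\max\{\lceil (2^{i+1}-2^{\nu(n+1)})/Z_i(n)\rceil : i\in S'(n)\}$; one should verify this max is $\ge 3$ so the case distinction is consistent, which holds because for $i\in S'(n)$, $Z_i(n)\le 2^{i-1}-1$ (the fact cited after Proposition \ref{kprop}) forces the ceiling to be at least $\lceil (2^{i+1}-2^{i})/(2^{i-1}-1)\rceil\ge 3$.

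The main obstacle I expect is the bookkeeping relating $S(n)$ to $S'(n)$: the definition of $S(n)$ requires a $0$ immediately above the string (the condition $\eps_{i+1}=0$), whereas $S'(n)$ allows the string to be followed by more $1$'s — wait, no, re-reading, $S'(n)$ requires the string to \emph{stop prior to the $2^0$ position}, i.e.\ it has a $0$ below it but may be part of a longer run above. So the careful point is that an index $i$ can lie in $S'(n)\setminus S(n)$ (if $\eps_{i+1}=1$) or in $S(n)\setminus S'(n)$ (if the string reaches $2^0$). I would handle the first discrepancy by noting that if $i\in S'(n)$ with $\eps_{i+1}=1$, then some larger $i'\in S(n)$ with $Z_{i'}(n)\ge Z_i(n)$... hmm, actually the cleanest route is to observe directly that replacing $S(n)$ by $S'(n)$ does not change which $k$ first pushes the whole max down, because (a) terminal-string indices are harmless at $k=3$ as above, and (b) for a string spanning indices $i+1,\dots,i+j$ the binding constraint among $2^{i'+1}-1-kZ_{i'}(n)$ for $i'$ in that range is realized — one must check this — at $i'=i$, the bottom of the string, precisely because $Z_{i'}(n)$ grows slower than $2^{i'+1}$ as $i'$ increases within the string. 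Making that last monotonicity claim precise, and confirming it picks out exactly $i\in S'(n)$, is the technical heart of the argument.
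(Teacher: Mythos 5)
Your overall strategy is the paper's: characterize $s(n)$ as the least $k$ for which every term $2^{i+1}-1-kZ_i(n)$ in (\ref{gmain}) drops to the stable value $2^{\nu(n+1)}-1$, treat $k=2$ separately via $\zcl_2$, and solve the inequality for $k$ to obtain the ceiling; your two special cases and the consistency check that the max of the ceilings is at least $3$ are fine. The genuine problem is in your final paragraph, and it comes from misreading the definition of $S'(n)$. In this paper ``begins a string'' refers to the \emph{most significant} bit of the run, exactly as for $S(n)$ (compare $S(n)=\{i:\eps_i=\eps_{i-1}=1,\ \eps_{i+1}=0\}$ and the worked example $S'(187)=\{5\}$ for $187=10111011_2$, where the run $111$ occupies positions $5,4,3$). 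Consequently $S'(n)\subseteq S(n)$: it is $S(n)$ with the (at most one) index whose run reaches position $0$ removed, i.e.\ $S'(n)=\{i\in S(n):Z_i(n)\ne0\}$. There are no indices in $S'(n)\setminus S(n)$, so your item (b) --- which you flag as the unproven ``technical heart'' --- addresses a nonexistent discrepancy, while your item (a) already finishes the argument: the one index of $S(n)\setminus S'(n)$ has $Z_i(n)=0$ and $2^{i+1}-1=2^{\nu(n+1)}-1$, so its term sits at the stable value for every $k\ge3$.

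Moreover, the monotonicity claim inside (b) is false as stated: within a run of consecutive $1$'s the bits contribute nothing to $Z_{i'}(n)$, so $Z_{i'}(n)$ is \emph{constant} across the run while $2^{i'+1}$ doubles, and the largest (hence binding) term $2^{i'+1}-1-kZ_{i'}(n)$ occurs at the \emph{top} of the run, not the bottom. Had you carried out (b) with the bottom-of-run reading of $S'(n)$ you would have derived a different, incorrect formula: for $n=187$ it would give $\lceil(2^4-2^2)/4\rceil=3$ instead of the correct $s(187)=\lceil(2^6-2^2)/4\rceil=15$. Once the definition is read correctly, your last paragraph collapses to the paper's one-line remark that terminal runs are omitted from $S'(n)$ because their terms are automatically at the stable value, and the rest of your argument goes through.
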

\begin{proof} It is shown in \cite[Expl 5.8]{5} that $g_k(2^v-1)=2^v-1$ for all $k\ge2$, hence $s(2^v-1)=2$. This also follows readily from (\ref{zeq}).

If the binary expansion of $n$ has a string of $i+1$ 1's at the end and no other consecutive 1's (so that $S(n)=\{i\}$ in (\ref{maineq})), then $Z_i(n)=0$. Thus by (\ref{gmain}) $g_k(n)=2^{i+1}-1=2^{\nu(n+1)}-1$ for $k\ge3$. If $n\ne2^{i+1}-1$, then $s(n)=3$, since  $g_2(n)>2^{i+1}-1$.

Now assume $S'(n)$ is nonempty. By (\ref{gmain}), $s(n)$ is the smallest $k$ such that \begin{equation}\label{disp}2^{i+1}-1-kZ_i(n)\le 2^{\nu(n+1)}-1\end{equation} for all $i\in S(n)$, which easily reduces to the claimed value.
 Note that if the string of 1's beginning at position $2^i$ goes all the way to the end, then (\ref{disp}) is satisfied; this case is omitted from $S'(n)$ in the theorem, because it would yield $0/0$.
\end{proof}

The following corollary is immediate.
\begin{cor}\label{cor} If $n$ is even and
$$k\ge\max\{3,\biggl\lceil\frac{2^{i+1}-1}{Z_i(n)}\biggr\rceil:i\in S(n)\},$$
then $\TC_k(P^n)=kn$. These are the only values of $n$ and $k$ for which $\zcl_k(P^n)=kn$.\end{cor}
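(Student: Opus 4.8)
The plan is to obtain this corollary by combining three facts already established in the excerpt: (i) Theorem~\ref{mainthm} and its equivalent form \eqref{gmain}, which identify $g_k(P^n) = kn - \zcl_k(P^n)$ with $\max\{2^{\nu(n+1)}-1,\ 2^{i+1}-1-kZ_i(n):i\in S(n)\}$ for $k\ge 3$; (ii) Proposition~\ref{lprop}, together with the remark that $\zcl_k(P^n)<kn$ when $n$ is odd; and (iii) the standard inequalities $\zcl_k(P^n)\le\TC_k(P^n)\le kn$, the latter being \cite[Prop 2.2]{5}. The equality $\TC_k(P^n)=kn$ will follow whenever we can show $\zcl_k(P^n)=kn$, i.e.\ $g_k(n)=0$.

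First I would dispose of the odd case: if $n$ is odd then $\nu(n+1)\ge 1$, so $2^{\nu(n+1)}-1\ge 1>0$, whence $g_k(n)>0$ and $\zcl_k(P^n)<kn$ for all $k$; moreover $\TC_k(P^n)=kn$ cannot hold by a dimension--vs.--cohomology argument already invoked in the excerpt (this is exactly the content of the sentence ``Theorem~\ref{mainthm} shows that $\zcl_k(P^n)<kn$ when $n$ is odd'' combined with the fact, from \cite{5}, that a sharp upper bound of $kn$ would force a nonzero product in the relevant cohomology ring). So assume $n$ is even, hence $\nu(n+1)=0$ and $2^{\nu(n+1)}-1=0$. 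Then \eqref{gmain} gives $g_k(n)=\max\{0,\ 2^{i+1}-1-kZ_i(n):i\in S(n)\}$, which vanishes precisely when $2^{i+1}-1\le kZ_i(n)$ for every $i\in S(n)$. Since $n$ is even, $i\in S(n)$ forces $i\ge 2$, so $Z_i(n)\ge 1>0$ and the condition $2^{i+1}-1\le kZ_i(n)$ is equivalent to $k\ge\lceil (2^{i+1}-1)/Z_i(n)\rceil$. Taking this over all $i\in S(n)$ and also requiring $k\ge 3$ (the regime in which \eqref{gmain} is valid) yields exactly the stated lower bound on $k$; conversely if $k$ is below this bound then some term in the max is positive and $\zcl_k(P^n)<kn$.

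The ``only values'' clause then requires showing that whenever $\zcl_k(P^n)=kn$ one indeed has $n$ even and $k\ge\max\{3,\lceil(2^{i+1}-1)/Z_i(n)\rceil:i\in S(n)\}$. For $k\ge 3$ this is the contrapositive of the computation just made. For $k=2$ one uses the known value $\zcl_2(P^n)=2^{e+1}-1$ for $2^e\le n<2^{e+1}$ (stated in the excerpt), which equals $2n$ only when $n=2^e$, i.e.\ $n+1$ is not a power of $2$ is false --- so these are the $n$ with empty $S(n)$, for which the displayed max is just $\{3\}$ and $k=2<3$, so they are correctly excluded; and $\zcl_1(P^n)=n<kn$ trivially. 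Finally, for even $n$ satisfying the displayed inequality, the chain $kn=\zcl_k(P^n)\le\TC_k(P^n)\le kn$ forces $\TC_k(P^n)=kn$, completing the proof. I expect the only mildly delicate point to be the bookkeeping that $i\in S(n)$ with $n$ even really does give $Z_i(n)\ge 1$ (so that the ceilings are well defined and the equivalence with $k\ge\lceil\cdot\rceil$ is exact) --- but this is immediate since an even $n$ has $\eps_0=0$, contributing $1$ to every $Z_i(n)$, $i\ge 0$. Everything else is a direct reading-off from \eqref{gmain}.
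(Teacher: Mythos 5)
Your argument follows the paper's route: the corollary is read off directly from (\ref{gmain}) (equivalently, from the computation underlying Proposition \ref{sprop}), by noting that for even $n$ the term $2^{\nu(n+1)}-1$ vanishes and that $2^{i+1}-1-kZ_i(n)\le 0$ is equivalent to $k\ge\bigl\lceil(2^{i+1}-1)/Z_i(n)\bigr\rceil$ because $Z_i(n)\ge1$ when $n$ is even (as $\eps_0=0$ contributes $2^0$); the paper simply declares this ``immediate.'' Your treatment of the odd case and the sandwich $kn=\zcl_k(P^n)\le\TC_k(P^n)\le kn$ is likewise exactly what is intended.

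The one place your write-up goes wrong is the $k=2$ case of the ``only values'' clause. You assert that $\zcl_2(P^n)=2^{e+1}-1$ ``equals $2n$ only when $n=2^e$,'' and then conclude that such $n$ are ``correctly excluded'' by the displayed condition. This is both arithmetically false and logically backwards: if $\zcl_2(P^{2^e})$ really did equal $2\cdot 2^e$, that would be a \emph{counterexample} to the ``only values'' claim (an even $n$ achieving $\zcl_k(P^n)=kn$ with $k=2<3$), not a confirmation of it. The correct observation is simpler: for $2^e\le n<2^{e+1}$ one has $\zcl_2(P^n)=2^{e+1}-1<2^{e+1}\le 2n$ (or just note that $2^{e+1}-1$ is odd while $2n$ is even), so $k=2$ never achieves $\zcl_k(P^n)=kn$ for $n\ge1$. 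With that one-line replacement the proof closes and coincides with the paper's.
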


In \cite[Def 5.10]{5}, a complicated formula was presented for numbers $r(n)$, and in \cite[Thm 5.11]{5}, it was proved that $s(n)\le r(n)$. It was conjectured there that $s(n)=r(n)$. However, comparison of the formula for $s(n)$ established in Proposition \ref{sprop} with their formula for $r(n)$ showed that there are many values of $n$ for which $s(n)<r(n)$. The first is $n=50$, where we prove $s(50)=5$, whereas their $r(50)$ equals 7.  Apparently their computer program did not notice that $$(x_1+x_5)^{63}(x_2+x_5)^{63}(x_3+x_5)^{62}(x_4+x_5)^{62}$$ contains the nonzero monomial $x_1^{50}x_2^{50}x_3^{50}x_4^{50}x_5^{50}$, showing that our $z_5(50)=250$ and $g_5(50)=0$, so $s(50)\le 5$.

In Table \ref{T2}, we present a table of some values of $s(-)$, omitting $s(2^v-1)=2$ and $s(2^v)=3$ for $v>0$.

\begin{table}[H]
\caption{Some values of $s(n)$}
\label{T2}
\begin{tabular}{c|cccccccccccccccccccc}
$n$&$5$&$6$&$9$&$10$&$11$&$12$&$13$&$14$&$17$-$21$&$22$&$23$&$24$&$25$&$26$&$27$&$28$&$29$&$30$\\
$s(n)$&$3$&$7$&$3$&$3$&$3$&$5$&$7$&$15$&$3$&$7$&$3$&$5$&$5$&$7$&$7$&$11$&$15$&$31$
\end{tabular}
\end{table}

In \cite{5}, there seems to be particular interest in $\TC_k(P^{3\cdot2^e})$. We easily read off from Theorem \ref{mainthm}  the following result.
\begin{prop}\label{3prop} For $k\ge2$ and $e\ge1$, we have
$$\zcl_k(P^{3\cdot2^e})=\begin{cases}(k-1)(2^{e+2}-1)&\text{if }(e=1,k\le6)\text{ or }(e\ge2,k\le4)\\
k\cdot3\cdot2^e&\text{otherwise.}\end{cases}$$\end{prop}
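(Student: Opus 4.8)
The goal is to evaluate $\zcl_k(P^{3\cdot 2^e})$ for $k\ge 2$ and $e\ge 1$, and the natural tool is Theorem \ref{mainthm} (equation (\ref{maineq})), so first I would identify the relevant binary data of $n=3\cdot 2^e=2^{e+1}+2^e$. Its binary expansion is $11$ followed by $e$ zeros, so $n+1=3\cdot 2^e+1$ is not a $2$-power (for $e\ge 1$), giving $\nu(n+1)=0$ and hence $2^{\nu(n+1)}-1=0$. The set $S(n)=\{i:\eps_i=\eps_{i-1}=1,\ \eps_{i+1}=0\}$ consists of exactly the index $i=e+1$, since the only run of two consecutive $1$'s starts at position $2^e$ (so $i-1=e$, $i=e+1$) and position $e+2$ is a $0$. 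Thus the max in (\ref{maineq}) is taken over the single value $\{0,\ 2^{e+2}-1-k\cdot Z_{e+1}(n)\}$.

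**Key computation.** Next I would compute $Z_{e+1}(n)$, the sum of the $2$-powers $\le 2^{e+1}$ corresponding to $0$'s in the binary expansion of $n$. Since $n$ has $1$'s only in positions $e$ and $e+1$, the $0$-positions $\le e+1$ are exactly $0,1,\dots,e-1$, so $Z_{e+1}(n)=2^e-1$. (One can also read this off from the identity $Z_i(n)=2^{i+1}-1-(n\bmod 2^{i+1})$: here $n\bmod 2^{e+2}=n$, giving $2^{e+2}-1-3\cdot 2^e=2^e-1$.) Therefore
$$\zcl_k(P^{3\cdot 2^e})=kn-\max\{0,\ 2^{e+2}-1-k(2^e-1)\}=\min\bigl(k\cdot 3\cdot 2^e,\ (k-1)(2^{e+2}-1)\bigr),$$
where the last equality is the elementary rewriting $kn-\max(0,A)=\min(kn,kn-A)$ with $kn-(2^{e+2}-1-k(2^e-1))=k(3\cdot 2^e-2^e+1)-(2^{e+2}-1)=k(2^{e+1}+1)-(2^{e+2}-1)=(k-1)(2^{e+2}-1)$.

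**Determining which branch wins.** Finally I would compare the two quantities: $(k-1)(2^{e+2}-1)\le k\cdot 3\cdot 2^e$ iff $2^{e+2}-1\le k\bigl(3\cdot 2^e-2^{e+2}+1\bigr)=k(2^e\cdot(-1)+1)$... wait—recompute: $3\cdot 2^e-(2^{e+2}-1)=3\cdot 2^e-4\cdot 2^e+1=1-2^e$, which is negative for $e\ge 1$, so I should instead write the comparison as $(k-1)(2^{e+2}-1)<k\cdot 3\cdot 2^e \iff k(3\cdot 2^e-(2^{e+2}-1))>-(2^{e+2}-1)\iff k(1-2^e)>1-2^{e+2}\iff k(2^e-1)<2^{e+2}-1$, i.e. $k<\frac{2^{e+2}-1}{2^e-1}$. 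For $e=1$ this threshold is $7$, so the first branch holds iff $k\le 6$; for $e\ge 2$ one checks $\frac{2^{e+2}-1}{2^e-1}=4+\frac{3}{2^e-1}$, which lies in $(4,5]$, equalling $5$ exactly when $e=2$ and lying in $(4,5)$ for $e\ge 3$—in all cases the first branch holds iff $k\le 4$. (The boundary case $k=5,e=2$ gives equality $(k-1)(2^{e+2}-1)=4\cdot 15=60=k\cdot 3\cdot 2^e$, so either formula is correct there, consistent with the "otherwise" clause.) This matches the claimed case split exactly, completing the proof. The only mildly delicate point is the arithmetic of the threshold $\frac{2^{e+2}-1}{2^e-1}$ and confirming it never produces a spurious integer value in $(4,5)$ for $e\ge 3$; since $3/(2^e-1)$ is not an integer for $e\ge 3$, there is no ambiguity.
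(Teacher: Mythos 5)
Your proof follows exactly the route the paper intends: the paper's entire proof is the remark that the result can be ``easily read off from Theorem \ref{mainthm}'', and you supply precisely that read-off, namely $S(3\cdot2^e)=\{e+1\}$, $Z_{e+1}(3\cdot2^e)=2^e-1$, $2^{\nu(n+1)}-1=0$, and the threshold comparison $k\lessgtr(2^{e+2}-1)/(2^e-1)$, all of which are correct. Two small points. First, your intermediate algebra has a sign slip: $kn+k(2^e-1)=k(3\cdot2^e+2^e-1)=k(2^{e+2}-1)$, not $k(3\cdot2^e-2^e+1)=k(2^{e+1}+1)$; the final identity $kn-\bigl(2^{e+2}-1-k(2^e-1)\bigr)=(k-1)(2^{e+2}-1)$ that you assert is nevertheless correct, so this is cosmetic. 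Second, and more substantively, Theorem \ref{mainthm} is stated only for $k\ge3$, and its formula genuinely fails for $k=2$ in general (e.g.\ at $n=2$ it would give $4$ rather than $3$), so the case $k=2$ of the proposition is not covered by your argument as written; it needs the separately known fact that $\zcl_2(P^n)=2^{e+2}-1$ for $2^{e+1}\le n<2^{e+2}$, which agrees with the first branch since $(k-1)(2^{e+2}-1)=2^{e+2}-1$ when $k=2$. This gap is arguably shared with the paper's own one-line proof, but it should be closed explicitly.
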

This shows that the estimate $s(3\cdot2^e)\le5$ for $e\ge2$ in \cite{5} is sharp.

 \def\line{\rule{.6in}{.6pt}}

\end{document}